\newcommand{\sh}{{\mathcyr{sh}}}
\newcommand{\h}{{\mathfrak H}}
\newcommand{\Q}{{\mathbb Q}}
\newcommand{\R}{{\mathbb R}}
\theoremstyle{plain}
\newtheorem{thm}{Theorem}[section]
\newtheorem{lem}[thm]{Lemma}
\newtheorem{prop}[thm]{Proposition}
\theoremstyle{definition}
\begin{document}


\title[Restricted sum formula and derivation relation for MZVs]{Restricted sum formula and derivation relation for multiple zeta values}

\author[T. Tanaka]{Tatsushi Tanaka}
\thanks{This work was partially supported by Kyoto Sangyo University Research Grants.}

\address{Department of Mathematics, Faculty of Science, Kyoto Sangyo University \\
Motoyama, Kamigamo, Kita-ku, Kyoto-city 603-8555, Japan}
\email{t.tanaka@cc.kyoto-su.ac.jp}

\keywords{multiple zeta values; restricted sum formula; derivation relation; derivation and automorphism.}

\subjclass[2010]{11M32}

\maketitle

\begin{abstract}
Two classes of relations for multiple zeta values are handled algebraically. A restricted sum formula is proved by Eie, Liaw and Ong. The derivation relation is proved by Ihara, Kaneko and Zagier. In this paper we show the latter implies the former. 
\end{abstract}

\section{Introduction}
The multiple zeta values (abbreviated as MZVs) is a real number defined by the convergent series
\[
\zeta (k_1 ,k_2, \ldots , k_n) = \sum_{m_1>m_2> \cdots >m_n>0} \frac{1}{m_1^{k_1} m_2^{k_2} \cdots m_n^{k_n}}
\]
for positive integers $k_1, k_2, \ldots ,k_n $ with $k_1 \geq 2$. It is known that many $\Q$-linear relations hold among MZVs. Moreover, MZVs have the so-called double shuffle product rule, which deepens the theory of MZVs. 

Let $a,\,b\geq 0,\ k\geq a+b+2$. As a generalization of the sum formula \cite{Gra,Zag}, the restricted sum formula
\begin{equation}\label{eqn1}
\sum_{\begin{subarray}{c} k_1+\cdots +k_b=k-a \\ k_1\geq 2, k_2,\ldots ,k_b\geq 1 \end{subarray}} \zeta(k_1, \ldots , k_b,\underbrace{1, \ldots ,1}_{a}) = \sum_{\begin{subarray}{c} k^{\prime}_1+\cdots +k^{\prime}_{a+1}=a+b+1 \\ k^{\prime}_1,\ldots ,k^{\prime}_{a+1}\geq 1 \end{subarray}} \zeta(k^{\prime}_1+k-a-b-1, k^{\prime}_2, \ldots , k^{\prime}_{a+1})
\end{equation}
for MZVs is proved in \cite{ELO}
. 
The derivation relation, which is described in the next section, is proved in \cite{IKZ} by reducing it to the regularized double shuffle relation. In \cite{Tan}, the derivation relation (and its certain extension) is proved as a part of Kawashima relation \cite{Kaw}. 

In this paper we show the derivation relation implies the restricted sum formula. Detailed statement is given in the next section.

\section{Main theorem}\label{sec2}
First the restricted sum formula \eqref{eqn1} and the derivation relation for MZVs are given in terms of the algebraic setup introduced in \cite{Hof}. Let $\h =\Q\langle x,y \rangle $ denote the noncommutative polynomial algebra over the rational numbers in two indeterminates $x$ and $y$, and let $\h^1$ and $\h^0$ denote the subalgebras $\Q +\h y$ and $\Q+x \h y$, respectively. The $\Q$-linear map $\mathit{Z}:\h^0 \to \R$ is defined by $\mathit{Z}(1)=1$ and 
\[
\mathit{Z}(x^{k_1-1}yx^{k_2-1}y\cdots x^{k_n-1}y)=\zeta(k_1,k_2,\ldots ,k_n).
\]

The shuffle product $\sh\colon \h \times \h \to \h$ is the $\Q$-bilinear map defined by 
\[
1\ \sh\ w =w\ \sh\ 1 =w
\]
for $w\in \h$ and the recursive rule
\[
uw\ \sh\ vw^{\prime} = u(w\ \sh\ vw^{\prime}) +v(uw\ \sh\ w^{\prime})
\]
for $u,\,v\in\{x,\,y\}$ and $w,\,w^{\prime}\in \h$. It is well known that the algebra $(\h,\sh)$ is commutative and associative, and $(\h^1,\sh)$ and $(\h^0,\sh)$ are its subalgebras. The map $Z$ is a $\sh$-homomorphism, {\em i.e.}
\[
Z(w\ \sh\ w^{\prime})=Z(w)Z(w^{\prime})
\]
for $w,\,w^{\prime}\in\h^0$. This is called the shuffle product formula for MZVs. 

The restricted sum formula \eqref{eqn1} is then stated, for $a,\,b\geq 0,\ k\geq a+b+2$, as
\begin{equation}\label{eqn2}
x(x^{k-a-b-2}\ \sh\ y^b)y^{a+1}-x^{k-a-b-1}(x^b\ \sh\ y^a)y \in \ker Z.
\end{equation}

A derivation $\partial$ on $\h$ is a $\Q$-linear endomorphism of $\h$ satisfying the Leibniz rule
\[
\partial(ww^{\prime})=\partial(w)w^{\prime}+w\partial(w^{\prime}). 
\]
Such a derivation is uniquely determined by its images of generators $x$ and $y$. Let $z=x+y$. For each $n\ge 1$, the derivation $\partial_n:\h \to \h$ is defined by $\partial_n(x)=xz^{n-1}y$ and $\partial_n(y)=-xz^{n-1}y$. It follows immediately that $\partial_n(\h)\subset\h^0$. The derivation relation for MZVs is then stated, for any $n\geq 1$, as
\begin{equation}\label{eqn3}
\partial_n(x \h y)\subset \ker Z.
\end{equation}

Here we state our main theorem. 
\begin{thm}[main theorem]\label{main}
The restricted sum formula \eqref{eqn2} is written as a linear combination of the derivation relation \eqref{eqn3}. That is, for $a,\,b\geq 0,\ k\geq a+b+2$, we have
\[
x(x^{k-a-b-2}\ \sh\ y^b)y^{a+1}-x^{k-a-b-1}(x^b\ \sh\ y^a)y\ \in\ \sum_{n\geq 1}\partial_n(x \h y).
\]
\end{thm}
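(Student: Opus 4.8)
The plan is to express both terms of the restricted sum formula \eqref{eqn2} in a form amenable to the derivation operators $\partial_n$. The key structural observation is that the derivations $\partial_n$ interact nicely with the shuffle product and with words of the shape $x(\cdots)y$, so one should look for an identity that rewrites $\sum_{n\ge 1}\partial_n(w)$ (for suitable $w\in x\h y$, or for a generating-function combination of such $w$) in closed form. Concretely, I would introduce a formal parameter, say work with $\sum_{n\ge 1}\partial_n t^n$ or a single well-chosen derivation, and compute its effect on a word $x^{p}y^{q}$ or on $x(x^{i}\sh y^{j})y^{k}$, using the Leibniz rule together with the shuffle recursion. Since $\partial_n(x)=xz^{n-1}y$ and $\partial_n(y)=-xz^{n-1}y$ with $z=x+y$, the derivation "inserts" a block $xz^{n-1}y$ at each letter with a sign $\pm$; summing over $n$ turns $\sum_{n\ge 1}z^{n-1}$ into a geometric-type object $(1-z)^{-1}$-flavoured sum, which in the polynomial setting is a finite telescoping once one restricts to a fixed total degree $k$.

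First I would fix the degree: both sides of the claimed membership live in degree $k$, so only finitely many $\partial_n$ (those with $n\le k-1$, and with the right weight bookkeeping) contribute, and the right-hand side $\sum_{n\ge1}\partial_n(x\h y)$ can be replaced by $\sum_{n=1}^{k-?}\partial_n(x\h_{k-n-1}y)$ where $\h_m$ is the degree-$m$ part. Next I would compute $\partial_n$ applied to a monomial $x^{a_0}y x^{a_1}y\cdots$ explicitly: by Leibniz it is a signed sum over positions of the word obtained by replacing that letter $u$ by $u\,xz^{n-1}y$ if $u=x$ (sign $+$) and by the block with sign $-$ if $u=y$. The crucial simplification is that adjacent $+$ and $-$ contributions from an $xy$ (or $yx$) pair partially cancel, and summing over $n$ collapses the $z^{n-1}$ into manageable shuffle expressions; I expect this to produce exactly the difference $x(x^{k-a-b-2}\sh y^b)y^{a+1}-x^{k-a-b-1}(x^b\sh y^a)y$ after choosing the argument of the derivation to be something like $x\cdot(x^{k-a-b-2}\sh y^{b})\cdot y^{a}\cdot(\text{something})$ or, more plausibly, a telescoping sum $\sum_{n} \partial_n$ of a fixed simple word such as $x^{k-a-b-1}y^{a+b+1}$ massaged through the shuffle.

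The main obstacle, as I see it, is identifying the precise element (or finite linear combination of elements) $w\in x\h y$ whose image $\sum_n\partial_n(w)$ equals the target; this is a matching-of-coefficients problem that requires a clean closed formula for $\partial_n$ acting on shuffle products of powers of $x$ and $y$. I would handle it by first establishing a lemma computing $\partial_n$ on $x^{i}\sh y^{j}$ (or on $x(x^i\sh y^j)y^k$) in terms of shuffles of lower-degree pieces — this is where the $z=x+y$ expansion must be combined with the shuffle recursion and where genuine care is needed — and then summing over $n$ using that on a fixed degree the sum $\sum_{n\ge1} z^{n-1}(\text{inserted between fixed letters})$ telescopes. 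Once that lemma is in place, the theorem should follow by a direct, if somewhat intricate, bookkeeping of exponents matching the two sides of \eqref{eqn2}. An induction on $a$ (or on $b$, or on $k$) may be needed to organize the telescoping cleanly, with the sum formula itself (the case $a=0$, or $b=1$) serving as the base case already known to lie in $\sum_{n\ge1}\partial_n(x\h y)$ via the classical derivation-relation proof of the sum formula.
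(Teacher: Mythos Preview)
Your proposal is a strategy sketch rather than a proof, and you correctly identify its own gap: you never produce the element $w\in x\h y$ (or finite combination thereof) whose image under $\sum_n\partial_n$ is the restricted-sum expression. Without that element, the ``telescoping'' and ``matching of coefficients'' you anticipate remain speculative; the Leibniz expansion of $\partial_n$ on a general monomial produces many terms with insertions of $xz^{n-1}y$ at every position, and there is no a priori reason these reorganize into the clean difference of two shuffles you need. The base case you invoke (the ordinary sum formula as a derivation relation) is fine, but the inductive step is not set up.

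The paper avoids this obstacle entirely by \emph{not} searching for a preimage under $\partial_n$. Instead it passes through the Kawashima reformulation
\[
\exp\!\Bigl(\sum_{n\ge1}\tfrac{\partial_n}{n}\Bigr)(x\h y)=L_x\varphi\Bigl(\tfrac{1}{1+y}\ast\h y\Bigr),
\]
where $\varphi$ is the involution $x\mapsto z=x+y$, $y\mapsto -y$ and $\ast$ is the harmonic product. This turns the problem into showing that the $\varphi$-transform of the restricted-sum expression lies in $\sum_{n\ge1}(y^n\ast\h y)$, i.e.\ that
\[
(z^{c}\ \sh\ (-y)^{b})(-y)^{a+1}-z^{c}(z^{b}\ \sh\ (-y)^{a})(-y)\in\sum_{n\ge1}(y^{n}\ast\h y)
\]
with $c=k-a-b-2$. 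The paper then proves an \emph{explicit} harmonic-product identity for this difference (its Proposition~\ref{prop2}), established via three-variable generating functions and the automorphisms $\Phi_{-yY}$, $\exp(du)$, $\Delta_{-Y}$ of Ihara--Kaneko--Zagier. The moral is that the harmonic product, not the shuffle alone, is the structure in which the preimage becomes visible; your purely shuffle-and-Leibniz approach is working on the wrong side of the $\varphi$-transform, which is why the target element is hard to guess.
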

\noindent It is known that the derivation relation is contained in Ohno's relation \cite{Ohn}. (See \cite{IKZ}.) Hence the theorem asserts that the restricted sum formula is also contained in Ohno's relation. 

\section{Derivation relation and Kawashima relation}\label{sec3}
Let $z_k=x^{k-1}y\ (\in\h y)$ for $k\geq 1$. The harmonic product $\ast\colon \h^1 \times \h^1 \to \h^1$ is the $\Q$-bilinear map defined by 
\[
1 \ast w =w \ast 1 =w
\]
for $w\in \h^1$ and the recursive rule
\[
z_kw \ast z_lw^{\prime} = z_k(w \ast z_lw^{\prime}) +z_l(z_kw \ast w^{\prime})+z_{k+l}(w \ast w^{\prime})
\]
for $k,\,l\geq 1$ and $w,\,w^{\prime}\in \h^1$. It is well known that the algebra $(\h^1,\ast)$ is commutative and associative, and $(\h^0,\ast)$ is its subalgebra. The map $Z$ is a $\ast$-homomorphism, {\em i.e.}
\[
Z(w \ast w^{\prime})=Z(w)Z(w^{\prime})
\]
for $w,\,w^{\prime}\in\h^0$. This is called the harmonic product formula for MZVs. 

Let $L_x$ denote the $\Q$-linear map on $\h$ defined for $w\in\h$ by $L_x(w)=xw$. Let $\varphi$ denote the automorphism on $\h$ characterized by $\varphi (x)=x+y$ and $\varphi (y)=-y$. We see the automorphism $\varphi$ is an involution. The derivation relation \eqref{eqn3} is rewritten by using the formulation of Kawashima relation as 
\begin{equation}\label{eqn4}
\exp\hspace{-2pt}\left(\sum_{n\geq 1}\frac{\partial_n}{n}\right)(x \h y)=L_x\varphi\left(\frac{1}{1+y} \ast \h y\right). 
\end{equation}
(See \cite[Appendix A]{Tan} for details.)
\begin{prop}\label{prop1}
For $a,\,b,\,c\geq 0$, we have
\begin{equation}\label{eqn5}
(z^c\ \sh\ (-y)^b)(-y)^{a+1}-z^c(z^b\ \sh\ (-y)^a)(-y)\ \in\ \sum_{n\geq 1}\left(y^n \ast \h y\right).
\end{equation}
\end{prop}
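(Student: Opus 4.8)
The plan is to prove Proposition~\ref{prop1} by relating the left-hand side to a shuffle-type identity living in $(\h^1,\ast)$, exploiting the fact that $\s$ and the harmonic product $\ast$ are both built from the same $y$'s once we work in the variable $y$ alone. First I would observe that all words appearing in \eqref{eqn5} lie in $\h^1$: indeed $z^c\s(-y)^b$ is a linear combination of words in $\{x,y\}$ ending in nothing forced, but after right-multiplication by $(-y)^{a+1}$ every word ends in $y$, so both terms lie in $\h y\subset\h^1$. The strategy is then to show that the left-hand side of \eqref{eqn5}, rewritten via $z=x+y$, is an element of $\sum_{n\ge1}(y^n\ast\h y)$ by directly expanding the $\ast$-products $y^n\ast w$ for $w\in\h y$ and matching coefficients. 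A cleaner route: recall that for the one-variable situation the harmonic product satisfies $y^n\ast w$ for $w$ a word can be computed by the stuffle rule, and in particular $\frac1{1+y}\ast\h y$ (which already appeared in \eqref{eqn4}) has an explicit description; so I would aim to express the left side of \eqref{eqn5} as a $\Q$-linear combination of elements $y^n\ast(\text{word in }\h y)$ by an inductive argument on $c$ (or on $a+b$).

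The key computational input I expect to need is an identity expressing $z^c\s(-y)^b$ in closed form. Since $z=x+y$ and $\s$ is a shuffle, one has the standard fact that $x^{m}\s y^{p}=\sum$ (sum over interleavings) and more generally $z^c\s(-y)^b$ can be expanded; the appearance of $z^c$ as a ``solid block'' suggests using the identity $z^c\s w=\sum_{w=w'w''}\binom{\,}{\,}\cdots$ or, more usefully, the generating-function identity $\sum_c z^c t^c = 1/(1-zt)$ interacting with the shuffle exponential. So the second step is to pass to generating functions in an auxiliary variable, turning the finite sums over $a,b,c$ into the coefficient extraction from a product of the form (shuffle of geometric-type series), and likewise writing $\sum_{n\ge1} y^n\ast(-)$ as $\bigl(\frac{1}{1-y t}-1\bigr)\ast(-)$ or the like. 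Then \eqref{eqn5} becomes a single generating-function identity in $(\h^1,\ast)$ and $(\h,\s)$ simultaneously, which should follow from the known compatibility between $\ast$ and $\s$ on words ending in $y$, or can be checked directly by the recursive definitions of $\s$ and $\ast$.

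Concretely the steps are: (1) reduce to showing the stated membership for each fixed $a,b,c$, and set $M:=$ LHS of \eqref{eqn5}; (2) compute $z^b\s(-y)^a$ and $z^c\s(-y)^b$ explicitly as sums of words, using the binomial structure of shuffling powers; (3) compute $y^n\ast v$ for $v$ a word in $\h y$ via the stuffle recursion, and assemble the candidate combination $\sum_n c_n\, (y^n\ast v_n)$; (4) verify termwise that the assembled combination equals $M$ by comparing coefficients of each word, where the telescoping between the two terms $(z^c\s(-y)^b)(-y)^{a+1}$ and $z^c(z^b\s(-y)^a)(-y)$ is what makes the non-$\ast$-decomposable pieces cancel. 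I expect step (4)---the combinatorial bookkeeping that shows exactly the ``bad'' words (those not of the form $y^n\ast(\text{something})$) cancel between the two terms---to be the main obstacle; this is where the precise shape of \eqref{eqn5}, in particular why $z^c$ sits outside the shuffle on the right but inside on the left and why the exponents are $a+1$ versus $a$, is forced. Induction on $c$ should reduce the $c>0$ case to the $c=0$ case plus a term already known to lie in $\sum_{n\ge1}(y^n\ast\h y)$, since prepending a $z=x+y$ interacts controllably with both products; the base case $c=0$ is then a finite identity about shuffles of powers of $y$, which is elementary.
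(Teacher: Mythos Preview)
Your outline never pins down the crucial datum: the actual elements $v_n\in\h y$ for which the left-hand side equals $\sum_n c_n\,(y^n\ast v_n)$. Without a candidate decomposition, steps (3) and (4) are not executable, and nothing in your sketch produces one. The paper's proof supplies exactly this missing piece: it defines a sequence $F_j(a)\in\h y$ by the harmonic convolution $\sum_{j=0}^{n}(-y)^j\ast F_{n-j}(a)=(-y)^{a+1+n}$ (equivalently $\sum_{j}F_j(a)Y^j=\Phi_{-yY}^{-1}((-y)^{a+1})$), and then proves the \emph{explicit} identity
\[
(z^c\ \sh\ (-y)^b)(-y)^{a+1}-z^c(z^b\ \sh\ (-y)^a)(-y)=-\sum_{j=0}^{b-1}(-y)^{b-j}\ast z^cF_j(a),
\]
from which Proposition~\ref{prop1} is immediate. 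The verification is done via generating functions, but not by termwise word-matching as you propose; it goes through the automorphism $\Phi_{-yY}(w)=(1+yY)\bigl(\frac{1}{1+yY}\ast w\bigr)$ of $\h$ and two short lemmas computing $\Phi_{-yY}^{\pm1}$ on the relevant geometric series.

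Your proposed induction on $c$ does not close. Using the shuffle recursion (which does treat $z=x+y$ as a single letter), one gets
\[
\text{LHS}(a,b,c+1)=z\cdot\text{LHS}(a,b,c)+(-y)\bigl(z^{c+1}\ \sh\ (-y)^{b-1}\bigr)(-y)^{a+1},
\]
so the inductive step requires both that left-multiplication by $z$ preserve $\sum_{n\ge1}(y^n\ast\h y)$ and that the residual term (which still contains $z^{c+1}$) be handled. Neither is available: left-multiplication by $z$ (or by $x$, or by $y$) does not in general preserve $y^n\ast\h y$, since the harmonic product is governed by the letters $z_k=x^{k-1}y$, not by left-concatenation of $x$ or $y$. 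Likewise your ``elementary'' base case $c=0$ is $(-y)^{a+b+1}-(z^b\ \sh\ (-y)^a)(-y)$, which still contains the full shuffle $z^b\ \sh\ (-y)^a$ and is not visibly in the target space without the same machinery. In short, the approach is missing the structural input---the $\ast$-inverse $F_j(a)$ and the automorphism $\Phi_{-yY}$---that makes the cancellation you anticipate in step~(4) actually occur.
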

\noindent Applying $L_x\varphi$ to \eqref{eqn5} (for $c=k-a-b-2$) and using \eqref{eqn4}, we obtain our main theorem.

\section{Key identity}\label{sec4}
For $a\geq 0$, we define the sequence $\{F_n(a)\}_{n\geq 0}$ by 
\[
\sum_{j=0}^n (-y)^j \ast F_{n-j}(a)=(-y)^{a+1+n}
\]
for $n\geq 0$. We find that $F_n(a)\in\h y$ for any $n,\,a\geq 0$. Multiplying $X^n$ and taking the sum $\sum_{n\geq 0}$, we notice that
\begin{equation}\label{eqn8}
\frac{1}{1+yX} \ast \sum_{j\geq 0} F_j(a)X^j=\frac{(-y)^{a+1}}{1+yX}. 
\end{equation}

Then we have the following key identity. 
\begin{prop}\label{prop2}
For $a,\,b,\,c\geq 0$, we have
\begin{equation}\label{eqn6}
(z^c\ \sh\ (-y)^b)(-y)^{a+1}-z^c(z^b\ \sh\ (-y)^a)(-y)=-\sum_{j=0}^{b-1}(-y)^{b-j} \ast z^c F_j(a).
\end{equation}
\end{prop}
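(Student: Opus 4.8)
The plan is to prove the key identity \eqref{eqn6} by induction on $b$, exploiting the defining recursion of the sequence $\{F_n(a)\}_{n\ge 0}$ together with the recursive rule for the shuffle product. First I would dispose of the base case $b=0$: there the left-hand side is $z^c(-y)^{a+1}-z^c(-y)^{a+1}=0$ (using $z^0\sh(-y)^0=1$ and $z^0\sh(-y)^a=(-y)^a$), while the right-hand side is an empty sum, so both sides vanish.

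For the inductive step I would isolate the $b$-dependence of the left-hand side. Writing $G(b):=(z^c\sh(-y)^b)(-y)^{a+1}-z^c(z^b\sh(-y)^a)(-y)$, the strategy is to compute $G(b)-(\text{something involving }G(b-1))$ using the shuffle recursion. Applying the recursive rule $uw\sh vw' = u(w\sh vw')+v(uw\sh w')$ to peel off the leading letter of $z^c=z\cdot z^{c-1}$ (or of $(-y)^b$) lets one relate $z^c\sh(-y)^b$ and $z^b\sh(-y)^a$ to shuffles with one fewer factor. The point is to match the resulting expression against the increment of the right-hand side of \eqref{eqn6}, namely the single new term $-(-y)^{b}\ast z^cF_0(a)$ plus the change in the other terms caused by shifting $j$; here the generating-function identity \eqref{eqn8} (equivalently the defining relation $\sum_{j=0}^n(-y)^j\ast F_{n-j}(a)=(-y)^{a+1+n}$) is exactly what converts a harmonic product $(-y)^m\ast(\,\cdot\,)$ back into a power $(-y)^{a+1+m}$, which is what appears from the shuffle computation. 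Since $F_j(a)\in\h y$, every term $(-y)^{b-j}\ast z^cF_j(a)$ lies in $\h^1$, so all manipulations stay inside the algebra $(\h^1,\ast)$ where associativity and commutativity of $\ast$ are available.

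The main obstacle I anticipate is bookkeeping: correctly tracking how the shuffle recursion redistributes the factors $z^c$, $(-y)^b$ and the trailing $(-y)^{a+1}$ or $(-y)$, and then recognizing the telescoping against the $F_j(a)$-recursion. A cleaner route, which I would try first, is to avoid induction entirely and work at the level of generating functions: multiply \eqref{eqn6} by $X^b$ (and possibly auxiliary variables for $c$ and $a$), sum over $b\ge 0$, and use $\sum_{b\ge0}(z^c\sh(-y)^b)X^b = z^c\sh\frac{1}{1+yX}$ together with \eqref{eqn8}. The identity should then collapse to a manipulation of rational expressions in $\frac{1}{1+yX}$ and $\frac{1}{1-zX}$ under the shuffle and harmonic products, with the term $-\sum_{j=0}^{b-1}(-y)^{b-j}\ast z^cF_j(a)$ becoming, after summation, essentially $-\frac{yX}{1+yX}\ast\bigl(z^c\cdot\frac{(-y)^{a+1}}{1+yX}\bigr)$ divided out appropriately by \eqref{eqn8}. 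Whichever form is used, the final step is to observe that Proposition~\ref{prop1} follows from Proposition~\ref{prop2} at once, since each summand $(-y)^{b-j}\ast z^cF_j(a)$ with $F_j(a)\in\h y$ lies in $\sum_{n\ge1}(y^n\ast\h y)$ (note $(-y)^{b-j}=(-1)^{b-j}y^{b-j}$ and $z^cF_j(a)\in\h y$), giving the desired membership in \eqref{eqn5}.
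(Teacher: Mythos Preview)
Your generating-function idea is indeed the right direction, and it is exactly what the paper does: one multiplies \eqref{eqn6} by $X^{a}Y^{b}Z^{c}$ and sums over all $a,b,c\ge 0$, so that the left-hand side becomes $\bigl(\tfrac{1}{1-zZ}\ \sh\ \tfrac{1}{1+yY}\bigr)\tfrac{-y}{1+yX}-\tfrac{1}{1-zZ}\bigl(\tfrac{1}{1-zY}\ \sh\ \tfrac{1}{1+yX}\bigr)(-y)$ and the right-hand side becomes a harmonic product $\bigl(\tfrac{1}{1+yY}-1\bigr)\ast\tfrac{1}{1-zZ}\sum_{a,j}F_{j}(a)X^{a}Y^{j}$. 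The crucial step you do not yet have is how to compare these two expressions: the harmonic product does not commute with left multiplication by $z^{c}$, so \eqref{eqn8} alone cannot be applied inside $\tfrac{1}{1+yY}\ast\bigl(\tfrac{1}{1-zZ}\,\cdot\,\bigr)$. The paper resolves this by introducing the map $\Phi_{-yY}(w)=(1+yY)\bigl(\tfrac{1}{1+yY}\ast w\bigr)$, which (by \cite{IKZ}) extends to an \emph{algebra automorphism} of $\h$ with $\Phi_{-yY}(x)=x$ and $\Phi_{-yY}(z)=z\tfrac{1}{1+yY}$. Two short lemmas then do all the work: Lemma~\ref{lem1} computes $\tfrac{1}{1+yY}\Phi_{-yY}\bigl(\tfrac{1}{1-zZ}\bigr)=\tfrac{1}{1-zZ}\ \sh\ \tfrac{1}{1+yY}$, and Lemma~\ref{lem2} computes $\Phi_{-yY}^{-1}\bigl(\tfrac{-y}{1+yX}\bigr)=\bigl(\tfrac{1}{1-zY}\ \sh\ \tfrac{1}{1+yX}\bigr)(-y)$, which in particular gives the closed form $F_{j}(a)=(z^{j}\ \sh\ (-y)^{a})(-y)$. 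With these in hand both generating functions collapse to $\tfrac{1}{1+yY}\Phi_{-yY}\bigl(\tfrac{1}{1-zZ}\bigr)\tfrac{-y}{1+yX}-\tfrac{1}{1-zZ}\Phi_{-yY}^{-1}\bigl(\tfrac{-y}{1+yX}\bigr)$ and the proof is complete. Without $\Phi_{-yY}$ (or an equivalent device) your ``manipulation of rational expressions'' has no mechanism to pass $z^{c}$ through the harmonic product, and this is the genuine missing idea.

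Your induction sketch, by contrast, is not on solid ground. The ``increment'' of the right-hand side when $b\mapsto b+1$ is not a single new term: every summand $(-y)^{b-j}\ast z^{c}F_{j}(a)$ changes its exponent, and the defining recursion $\sum_{j=0}^{n}(-y)^{j}\ast F_{n-j}(a)=(-y)^{a+1+n}$ controls $F_{n}(a)$, not $z^{c}F_{n}(a)$. Peeling a letter off $z^{c}$ or $(-y)^{b}$ with the shuffle rule does not produce a relation that telescopes against this recursion; if you try it you will find yourself needing precisely the multiplicativity that $\Phi_{-yY}$ encodes. I would drop the induction and follow the generating-function route, inserting the automorphism $\Phi_{-yY}$ and the two lemmas above.
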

\noindent We easily see that Proposition \ref{prop2} implies Proposition \ref{prop1}. 

We define $\Phi_{-yY}\colon \h^1 \to \h^1$ by 
\[
\Phi_{-yY}(w)=(1+yY)\left(\frac{1}{1+yY} \ast w\right). 
\]
Then $\Phi_{-yY}$ is an automorphism of $\h^1$ and extends to an automorphism of $\h$, with $\Phi_{-yY}(x)=x$ and $\Phi_{-yY}(z)=z\frac{1}{1+yY}$. (See \cite[Proposition 6]{IKZ} for details.)

We also define the map $d\colon \h \to \h$ by $d(w)=y\ \sh\ w-yw$. Then $d$ is a derivation on $\h$ and we have
\[
\exp (du)(w)=(1-yu)\left(\frac{1}{1-yu}\ \sh\ w\right),
\]
where $u$ is a formal parameter. We find that $\exp (du)$ is an automorphism on $\h$ characterized by $\exp (du)(x)=x\frac{1}{1-yu}$ and $\exp (du)(y)=y\frac{1}{1-yu}$. We also see $\exp (du)^{-1}=\exp (-du)$. (See \cite[Proposition 7]{IKZ} for details.)

To prove Proposition \ref{prop2}, we use the following two lemmas. The proofs are given in the next section. 
\begin{lem}\label{lem1}
The identity
\[
\frac{1}{1+yY}\,\Phi_{-yY}\hspace{-2pt}\left(\frac{1}{1-zZ}\right)=\frac{1}{1-zZ}\ \sh\ \frac{1}{1+yY}
\]
holds. 
\end{lem}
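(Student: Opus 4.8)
The plan is to prove the identity by showing that both sides, viewed as elements of $\h[[Y,Z]]$, satisfy the same functional characterization, or alternatively by a direct generating-function manipulation using the known descriptions of $\Phi_{-yY}$ and of the shuffle exponential $\exp(du)$. The cleanest route seems to be the latter: first I would rewrite $\frac{1}{1-zZ}$ using the automorphism $\exp(du)$. Since $\exp(du)(x)=x\frac{1}{1-yu}$ and $\exp(du)(y)=y\frac{1}{1-yu}$, it follows that $\exp(du)(z)=z\frac{1}{1-yu}$, and iterating one finds a closed form for $\exp(du)\bigl(\frac{1}{1-zZ}\bigr)$ as $\frac{1}{1-z\,w(u,Z)}$ for an appropriate series $w(u,Z)$; setting $u$ to a suitable value of $Y$ identifies this with $\frac{1}{1-zZ}$ composed with the shuffle against $\frac{1}{1+yY}$, because $\exp(du)(w)=(1-yu)\bigl(\frac{1}{1-yu}\ \sh\ w\bigr)$. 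In parallel, the left-hand side is governed by $\Phi_{-yY}$, whose action is $\Phi_{-yY}(x)=x$ and $\Phi_{-yY}(z)=z\frac{1}{1+yY}$, so $\Phi_{-yY}\bigl(\frac{1}{1-zZ}\bigr)=\frac{1}{1-z\frac{Z}{1+yY}}$ after summing the geometric series in $\h^1$.

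The key computational step is then to verify that
\[
\frac{1}{1+yY}\cdot\frac{1}{1-z\frac{Z}{1+yY}}=\frac{1}{1-zZ}\ \sh\ \frac{1}{1+yY}
\]
in $\h[[Y,Z]]$. For the left side, $\frac{1}{1+yY}\cdot\frac{1}{1-z\frac{Z}{1+yY}}=\sum_{n\geq 0}\frac{1}{1+yY}\Bigl(z\frac{Z}{1+yY}\Bigr)^n$, and one expands each factor $\frac{1}{1+yY}$ as a geometric series in $yY$. For the right side, the shuffle product $\frac{1}{1-zZ}\ \sh\ \frac{1}{1+yY}=\bigl(\sum_k z^kZ^k\bigr)\ \sh\ \bigl(\sum_m(-y)^mY^m\bigr)$ is computed termwise using the shuffle of $z^k$ with $(-y)^m$. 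Comparing coefficients of $Y^mZ^n$ reduces the lemma to the combinatorial identity $z^n\ \sh\ (-y)^m=\sum (-y)^{a_0}z(-y)^{a_1}z\cdots z(-y)^{a_n}$ summed over $a_0+\cdots+a_n=m$, $a_i\geq 0$ — that is, shuffling $m$ copies of $-y$ into the $n+1$ gaps around a word of $n$ letters $z$, each gap receiving any number of them. This is exactly the standard evaluation of the shuffle of two powers of single letters.

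I expect the main obstacle to be bookkeeping rather than conceptual: one must be careful that the geometric-series expansions of $\frac{1}{1+yY}$ on the left side are placed in the correct noncommutative order relative to the $z$'s, so that the collected power of $Y$ distributes across the gaps precisely as the shuffle on the right side demands. A secondary point requiring care is the legitimacy of manipulating $\frac{1}{1-zZ}$ and $\frac{1}{1+yY}$ as elements of the completed algebra $\h[[Y,Z]]$ and of applying the automorphisms $\Phi_{-yY}$ and $\exp(du)$ term by term; since both automorphisms act on the relevant generators by multiplication by a unit power series, convergence in the $(Y,Z)$-adic topology is automatic, so this is routine once stated. Once the coefficient comparison is set up, the identity follows directly, and no appeal to the map $Z$ or to properties of MZVs is needed — the lemma is purely algebraic.
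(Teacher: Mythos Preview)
Your argument is correct. The coefficient comparison you set up does establish the identity: once you know $\Phi_{-yY}(z)=z\frac{1}{1+yY}$, the left-hand side becomes $\sum_{n\ge 0}\frac{1}{1+yY}\bigl(z\frac{1}{1+yY}\bigr)^{n}Z^{n}$, and matching the coefficient of $Y^mZ^n$ with that of $z^n\ \sh\ (-y)^m$ works. One small caution on wording: $z=x+y$ is not literally a letter, so the ``standard evaluation of the shuffle of two powers of single letters'' needs a one-line check. The point is that the shuffle recursion still gives $z\cdot w\ \sh\ y\cdot w'=z(w\ \sh\ yw')+y(zw\ \sh\ w')$, because the two terms coming from splitting $z=x+y$ recombine; with that observed, your interleaving description of $z^n\ \sh\ (-y)^m$ follows by induction.

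The paper's proof takes a different, shorter route. It invokes the factorization $\Phi_{-yY}=\exp(-dY)\circ\Delta_{-Y}$ from \cite[Corollary~3]{IKZ}, where $\Delta_{-Y}$ is the automorphism with $\Delta_{-Y}(x)=x\frac{1}{1-yY}$ and $\Delta_{-Y}(z)=z$. Since $\Delta_{-Y}$ fixes $z$, it fixes $\frac{1}{1-zZ}$, so $\Phi_{-yY}\bigl(\frac{1}{1-zZ}\bigr)=\exp(-dY)\bigl(\frac{1}{1-zZ}\bigr)$, and then the shuffle description $\exp(-dY)(w)=(1+yY)\bigl(\frac{1}{1+yY}\ \sh\ w\bigr)$ finishes in one line. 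In effect, your first paragraph is one observation away from this argument: you computed both $\exp(-dY)\bigl(\frac{1}{1-zZ}\bigr)$ and $\Phi_{-yY}\bigl(\frac{1}{1-zZ}\bigr)$ separately, but did not note that they coincide simply because both automorphisms send $z\mapsto z\frac{1}{1+yY}$. Had you made that link, the coefficient comparison would have been unnecessary. What your approach buys is self-containment---no appeal to the $\Delta_{-Y}$ factorization from \cite{IKZ}---at the cost of a short combinatorial verification; the paper's approach is quicker but imports more machinery.
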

\begin{lem}\label{lem2}
The identity
\[
\Phi_{-yY}^{-1}\hspace{-2pt}\left(\frac{-y}{1+yX}\right)=\left(\frac{1}{1-zY}\ \sh\ \frac{1}{1+yX}\right)(-y)
\]
holds. 
\end{lem}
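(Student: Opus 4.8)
The plan is to prove Lemma \ref{lem2} by computing both sides explicitly as noncommutative rational expressions in the central parameters $X,Y$ and checking that they coincide. The only genuinely new input needed is the explicit form of $\Phi_{-yY}^{-1}$ on the single generator $y$; once that is in hand, both sides collapse to a common expression by formal manipulation.

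First I would establish the key computation
\[
\Phi_{-yY}^{-1}(y)=\frac{1}{1-zY}\,y .
\]
Since $\Phi_{-yY}$ is a concatenation automorphism, it suffices to verify $\Phi_{-yY}\!\left(\frac{1}{1-zY}y\right)=y$. Using $\Phi_{-yY}(x)=x$ and $\Phi_{-yY}(z)=z\frac{1}{1+yY}$ one has $\Phi_{-yY}\!\left(\frac{1}{1-zY}\right)=\frac{1}{1-z\frac{1}{1+yY}Y}$ and $\Phi_{-yY}(y)=z\frac{1}{1+yY}-x$; expanding the geometric series and keeping $Y$ central, the claim reduces, after cancelling the common left factor $z$ and multiplying by $1+yY$, to the identity $1-xY=\left(1-z\frac{1}{1+yY}Y\right)(1+yY)$, which holds because $y-z=-x$. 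This is the technical heart of the argument.

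Next I would use this to evaluate the left-hand side. Writing $q=\frac{1}{1-zY}y=\Phi_{-yY}^{-1}(y)$ and using that $\Phi_{-yY}^{-1}$ is a concatenation automorphism fixing $x$, one gets $\Phi_{-yY}^{-1}\!\left(\frac{1}{1+yX}\right)=\frac{1}{1+Xq}$, hence
\[
\Phi_{-yY}^{-1}\!\left(\frac{-y}{1+yX}\right)=\Phi_{-yY}^{-1}(-y)\,\Phi_{-yY}^{-1}\!\left(\frac{1}{1+yX}\right)=-q\,\frac{1}{1+Xq}.
\]
For the right-hand side I would realize the shuffle through a regularization automorphism. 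Exactly as $\exp(du)$ implements shuffle multiplication by $\frac{1}{1-yu}$, the derivation $e(w)=z\ \sh\ w-zw$ gives an automorphism $\exp(eu)$ with $\exp(eu)(y)=y\frac{1}{1-zu}$ and $\frac{1}{1-zu}\ \sh\ w=\frac{1}{1-zu}\exp(eu)(w)$. Applying this with $u=Y$ and $w=\frac{1}{1+yX}$ yields $\frac{1}{1-zY}\ \sh\ \frac{1}{1+yX}=\frac{1}{1-zY}\frac{1}{1+Xy\frac{1}{1-zY}}$; appending the trailing $(-y)$ and using the termwise commutation $\frac{1}{1-zY}\frac{1}{1+Xy\frac{1}{1-zY}}=\frac{1}{1+Xq}\frac{1}{1-zY}$ gives $-\frac{1}{1+Xq}q$. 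Since $\frac{1}{1+Xq}$ is a power series in $q$ it commutes with $q$, so $-q\frac{1}{1+Xq}=-\frac{1}{1+Xq}q$ and the two sides agree.

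The main obstacle is the key computation of $\Phi_{-yY}^{-1}(y)$; everything downstream is formal. I expect the noncommutativity to be the only real source of friction, so I would be careful to keep $X,Y$ central and to track the left/right placement of $x,y,z$ throughout. One could instead expand the shuffle using the paper's $d$ together with the commutativity of $\ \sh\ $, but then the final identification requires an extra series identity; expanding along the $z$-series via $e$ makes the match immediate.
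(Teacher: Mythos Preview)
Your proof is correct. The one expository wrinkle is the phrase ``after cancelling the common left factor $z$'': there is no such cancellation in the verification of $\Phi_{-yY}\bigl(\tfrac{1}{1-zY}y\bigr)=y$. What you actually need is that your identity $1-xY=(1-z\tfrac{1}{1+yY}Y)(1+yY)$ gives $\Phi_{-yY}\bigl(\tfrac{1}{1-zY}\bigr)=(1+yY)\tfrac{1}{1-xY}$, together with the factorization $\Phi_{-yY}(y)=z\tfrac{1}{1+yY}-x=(1-xY)\,y\,\tfrac{1}{1+yY}$; multiplying these and using that $y$ commutes with $1+yY$ yields $y$. Alternatively, the one-line check $z\tfrac{1}{1+yY}-x=\bigl(1-z\tfrac{1}{1+yY}Y\bigr)y$ (equivalent to $z\tfrac{1}{1+yY}(1+yY)=z$) does it directly. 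Either way the key claim $\Phi_{-yY}^{-1}(y)=\tfrac{1}{1-zY}\,y$ stands, and the rest of your argument goes through.

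Compared with the paper's proof, your route is genuinely different in organization though it rests on the same algebra. The paper first collapses the shuffle to the closed form $\tfrac{1}{1+yX-zY}$ via the already-available automorphism $\exp(-dX)$, and then pushes $\Phi_{-yY}$ forward through this expression using $\Phi_{-yY}(y)$ and $\Phi_{-yY}(z)$; the simplification hinges on the factorizations $(1-xY)(1+yX)$ and $(1-xY)\tfrac{y}{1+yY}$. You instead isolate $\Phi_{-yY}^{-1}(y)=q$ once and for all, apply $\Phi_{-yY}^{-1}$ to the left-hand side, and introduce the analog derivation $e(w)=z\,\sh\,w-zw$ to rewrite the shuffle so that the right-hand side is visibly a power series in $q$. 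The paper's approach stays within the tools already set up and produces a clean intermediate closed form; your approach adds a small new tool ($e$) but makes the final match a tautology about commuting power series in $q$, avoiding the somewhat intricate two-step factorization the paper carries out.
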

Assuming these lemmas, we prove Proposition \ref{prop2}. 
\begin{proof}[Proof of Proposition \ref{prop2}]
Considering generating functions is convenient for the proof. Using Lemma \ref{lem1} and \ref{lem2}, we see that
\begin{align*}
\sum_{a,b,c\geq 0} (\text{LHS of \eqref{eqn6}}) X^aY^bZ^c&=\left(\frac{1}{1-zZ}\ \sh\ \frac{1}{1+yY}\right)\frac{-y}{1+yX}-\frac{1}{1-zZ}\left(\frac{1}{1-zY}\ \sh\ \frac{1}{1+yX}\right)(-y) \\
&=\frac{1}{1+yY}\,\Phi_{-yY}\hspace{-2pt}\left(\frac{1}{1-zZ}\right)\frac{-y}{1+yX}-\frac{1}{1-zZ}\,\Phi_{-yY}^{-1}\hspace{-2pt}\left(\frac{-y}{1+yX}\right).
\end{align*}
On the other hand, 
\begin{align*}
\sum_{a,b,c\geq 0} (\text{RHS of \eqref{eqn6}}) X^aY^bZ^c&=-\sum_{a,c\geq 0}\sum_{i\geq 1,j\geq 0}(-y)^i \ast z^cF_j(a)X^aY^{i+j}Z^c \\
&=\left(\frac{1}{1+yY}-1\right) \ast \frac{1}{1-zZ}\sum_{a,j\geq 0}F_j(a)X^aY^j \\
&=\frac{1}{1+yY} \ast \frac{1}{1-zZ}\,\Phi_{-yY}^{-1}\hspace{-2pt}\left(\frac{-y}{1+yX}\right)-\frac{1}{1-zZ}\,\Phi_{-yY}^{-1}\hspace{-2pt}\left(\frac{-y}{1+yX}\right). 
\end{align*}
The last equality is because of \eqref{eqn8} or $\sum_{j\geq 0}F_j(a)Y^j=\Phi_{-yY}^{-1}\hspace{-2pt}\left((-y)^{a+1}\right)$. By definition of $\Phi_{-yY}$, we have
\begin{align*}
\frac{1}{1+yY} \ast \frac{1}{1-zZ}\,\Phi_{-yY}^{-1}\hspace{-2pt}\left(\frac{-y}{1+yX}\right)&=\frac{1}{1+yY}\,\Phi_{-yY}\hspace{-2pt}\left(\frac{1}{1-zZ}\,\Phi_{-yY}^{-1}\hspace{-2pt}\left(\frac{-y}{1+yX}\right)\right) \\
&=\frac{1}{1+yY}\,\Phi_{-yY}\hspace{-2pt}\left(\frac{1}{1-zZ}\right)\frac{-y}{1+yX}.
\end{align*}
Therefore we conclude the proposition. 
\end{proof}

\section{Proofs of lemmas}\label{sec5}
We prove two lemmas in the previous section. 
\begin{proof}[Proof of Lemma \ref{lem1}]
By \cite[Corollary 3]{IKZ}, we have
\[
\Phi_{-yY}=\exp (-dY)\circ\Delta_{-Y}. 
\]
Here $\Delta_{-y}$ is the automorphism on $\h$ characterized by $\Delta_{-y}(x)=x\frac{1}{1-yY}$ and $\Delta_{-y}(z)=z$. Hence
\begin{align*}
\frac{1}{1+yY}\Phi_{-yY}\hspace{-2pt}\left(\frac{1}{1-zZ}\right)&=\frac{1}{1+yY}\,\exp (-dY)\hspace{-2pt}\left(\Delta_{-Y}\left(\frac{1}{1-zZ}\right)\right) \\
&=\frac{1}{1+yY}\,\exp (-dY)\hspace{-2pt}\left(\frac{1}{1-zZ}\right) \\
&=\frac{1}{1-zZ}\ \sh\ \frac{1}{1+yY}
\end{align*}
to obtain the lemma. 
\end{proof}
\begin{proof}[Proof of Lemma \ref{lem2}]
Since
\[
\frac{1}{1-zY}\ \sh\ \frac{1}{1+yX}=\frac{1}{1+yX}\,\exp (-dY)\hspace{-2pt}\left(\frac{1}{1-zY}\right)=\frac{1}{1+yX-zY}, 
\]
we have to show is
\begin{equation}\label{eqn7}
\Phi_{-yY}\hspace{-2pt}\left(\frac{1}{1+yX-zY}(-y)\right)=\frac{-y}{1+yX}.
\end{equation}
For the left-hand side, we have
\begin{align*}
\Phi_{-yY}\hspace{-2pt}\left(\frac{1}{1+yX-zY}(-y)\right)&=\frac{1}{1+\Phi_{-yY}(y)X-\Phi_{-yY}(z)Y}\left(-\Phi_{-yY}(y)\right) \\
&=-\frac{1}{1+\left(y-z\frac{yY}{1+yY}\right)X-z\frac{1}{1+yY}Y}\left(y-z\frac{yY}{1+yY}\right).
\end{align*}
We find
\begin{align*}
\frac{1}{1+\left(y-z\frac{yY}{1+yY}\right)X-z\frac{1}{1+yY}Y}&=(1+yY)\frac{1}{1+yY+\left(y(1+yY)-zyY\right)X-zY} \\
&=(1+yY)\frac{1}{(1-xY)(1+yX)}
\end{align*}
and
\[
y-z\frac{yY}{1+yY}=\left(y(1+yY)-zyY\right)\frac{1}{1+yY}=(1-xY)\frac{y}{1+yY},
\]
and hence conclude \eqref{eqn7} and the lemma. 
\end{proof}

\section*{Acknowledgements}
The author expresses his gratitude to Yasuo Ohno for valuable comments.



\begin{thebibliography}{99}


\bibitem{ELO} Eie M., Liaw, W., Ong Y. L., {\em A restricted sum formula for multiple zeta values}, J. Number Theory {\bf 129} (2009) 908--921. 

\bibitem{Gra} Granville A., {\em A decomposition of Riemann's zeta-function}, London Math. Soc. Lecture note Ser. {\bf 247}, Cambridge (1997) 95--101.

\bibitem{Hof} Hoffman M., {\em The algebra of multiple harmonic series}, J. Algebra {\bf 194} (1997) 447--495. 

\bibitem{IKZ} Ihara K., Kaneko M., Zagier D., {\itshape Derivation and double shuffle relations for multiple zeta values}, Compositio Math. {\bf 142} (2006) 307--338.

\bibitem{Kaw} Kawashima G., {\em A class of relations among multiple zeta values}, J. Number Theory {\bf 129} (2009) 755--788. 

\bibitem{Ohn} Ohno Y., {\em A generalization of the duality and sum formulas on the multiple zeta values}, J. Number Theory {\bf 74} (1999) 39--43. 


\bibitem{Tan} Tanaka T., {\em On the quasi-derivation relation for multiple zeta values}, J. Number Theory {\bf 129} (2009) 2021--2034. 

\bibitem{Zag} Zagier D., {\em Multiple zeta values}, Unpublished manuscript, Bonn (1995). 

\end{thebibliography}
\end{document}